\theoremstyle{plain}
\newtheorem{thm}{Theorem}[section]
\newtheorem{theorem}[thm]{Theorem}
\newtheorem{lemma}[thm]{Lemma}
\theoremstyle{definition}
\newtheorem{remark}[thm]{Remark}
\newtheorem{defin}[thm]{Definition}
\newtheorem{example}[thm]{Example}
\numberwithin{equation}{section}
\newcommand{\sA}{{\mathcal A}}
\newcommand{\sB}{{\mathcal B}}
\newcommand{\sC}{{\mathcal C}}
\newcommand{\sF}{{\mathcal F}}
\newcommand{\sH}{{\mathcal H}}
\newcommand{\sL}{{\mathcal L}}
\newcommand{\sR}{{\mathcal R}}
\newcommand{\sS}{{\mathcal S}}
\newcommand{\sW}{{\mathcal W}}
\newcommand{\sY}{{\mathcal Y}}
\newcommand{\sZ}{{\mathcal Z}}
\newcommand{\PP}{\ensuremath{\mathbb{P}}}
\newcommand{\CC}{\ensuremath{\mathbb{C}}}
\newcommand{\ZZ}{\ensuremath{\mathbb{Z}}}
\newcommand{\hol}{\ensuremath{\mathcal{O}}}
\newcommand\la{\lambda}
\newcommand\Lam{\Lambda}
\newcommand\s{\sigma}
\newcommand\Si{\Sigma}
\newcommand\al{\alpha}
\newcommand\Ga{\Gamma}
\newcommand\de{\delta}
\newcommand{\ra}{\ensuremath{\rightarrow}}
\def\eea{\end{eqnarray*}}
\def\bea{\begin{eqnarray*}}
\newcommand\dual{\mathrel{\raise3pt\hbox{$\underline{\mathrm{\thinspace d
\thinspace}}$}}}
\newcommand\qe{\ifhmode\unskip\nobreak\fi\quad $\Box$}       % box for QED
\def\BOX{\hfill\lower.5\baselineskip\hbox{$\Box$}}
\newtheorem{theo}{Theorem}[section]
\newtheorem{remarkk}[theo]{Remark}
\newenvironment{rem}{\begin{remarkk}\rm}{\end{remarkk}}
\newtheorem{prop}[theo] {Proposition}
\newcommand{\Proof}{{\it Proof. }}
\title [Moduli of hypersurfaces in Abelian Varieties ]{The Moduli Space of  smooth  Ample Hypersurfaces in Abelian Varieties}
\author{Fabrizio Catanese 
and Yongnam Lee
}
\address{Lehrstuhl Mathematik VIII, Mathematisches Institut der Universit\"{a}t
Bayreuth, NW II, Universit\"{a}tsstr. 30,
95447 Bayreuth, (and Korea Institute for Advanced Study, Hoegiro 87, Seoul, 
133-722, Korea)}
\email{Fabrizio.Catanese@uni-bayreuth.de}
\address {Department of Mathematical Sciences\\
KAIST\\
291, Daehak-ro, Yuseong-gu\\ 
Daejeon, 34141, Korea}
\email{ynlee@kaist.ac.kr}
\thanks{AMS Classification: 14D15, 14J15, 14J70, 14K12, 32G05, 32G10, 32H02.\\ 
Key words: Hypersurfaces, Abelian varieties, Deformation theory,  embeddings, iterated univariate coverings, Inoue-type varieties.\\
The present work took place in the framework  of the 
 ERC Advanced grant n. 340258, `TADMICAMT'. 
 Both authors would also like to  acknowledge the support and hospitality
 of KIAS, Seoul which they visited as Research Scholar, respectively as Affiliate Professor. The second author was also supported by Samsung Science and Technology Foundation under Project Number SSTF-BA1701-04.
  A final revision of the paper was done when the first  author visited    the Mathematical Sciences Research Institute in Berkeley, during the Spring 2019 Program `Birational Geometry and Moduli', supported by the NSF  Grant No. DMS-1440140 }
\date{\today}
\begin{document}

\begin{abstract}
We  give a characterization of smooth ample Hypersurfaces  in Abelian Varieties and also  describe   the   connected component  of the moduli space containing such hypersurfaces of a given polarization type: we show that this component is irreducible and that 
it consists of these Hypersurfaces,
plus the  smooth iterated univariate coverings  of normal type (of the same polarization type).

The above manifolds yield also a connected component  of the open set of Teichm\"uller space
 consisting of K\"ahler complex structures.

\end{abstract}
%\vskip 0.5\baselineskip

\maketitle

%%%%%%%% pag  2 inizio %%%%%%%%%%%%%%%%%%

%%%%%%%%%%%%%%%%%%%%%%%%%%%
%%%%%%%%%%%%%%%%%%%%%%%%%%%%%%%%%%%%%%%%%%%%%%%%

\section {Introduction}

   Since the seminal work of Kodaira and Spencer \cite{KSII} (see also \cite{quintics})  it is clear  that the main problem in the classification theory
of complex manifolds is to describe compact complex  manifolds and the relation of deformation equivalence among them. 
For instance, any deformation of a complex torus is again a complex torus (see \cite{A-S},  \cite{deflarge}).

Via Teichm\"uller theory
\cite{superficial},
this problem  can be seen as the description of the connected components in the space of complex structures on a given oriented differentiable manifold.

In this paper we focus on the case of (smooth ample) hypersurfaces in Abelian varieties, which yield points of the moduli space
of canonically polarized manifolds introduced by Viehweg \cite{vieh}, \cite{viehweg} (or, for complex dimension 2, of the Gieseker moduli space 
\cite{gieseker} of surfaces of general type).

We know that these moduli spaces have in general a lot of distinct connected components \cite{cw}, which we cannot hope at the moment 
to fully understand; unless the manifolds in question have special topological properties, e.g.  if they have a  large fundamental group.

For instance,  in this paper we show that there is an irreducible moduli space parametrizing all the K\"ahler complex manifolds which are deformations 
of ample hypersurfaces in an Abelian variety. And we ask whether these are all the possible complex structures
on the differentiable manifolds underlying ample hypersurfaces in an Abelian variety.  

Before we turn to a more detailed description of our present results, let us recall the definition of Inoue type varieties, 
an attempt to make precise the above vague notion of `special topological properties'.

The first author and Ingrid Bauer \cite{BC-IT} defined,  quite  generally,  an Inoue-Type Variety to be the quotient $ X = X' / G$
of an ample smooth hypersurface $X'$ in a projective classifying space $Z$ by a free action of a finite group $G$.
They showed in many special cases (see  a few  instances in  \cite{BC-Bu}, \cite{BC-IT},  \cite{BCF}) that for such varieties  the topology determines the moduli space
(see also \cite{topmethods} for a general treatment of this phenomenon). But essentially, up to now, in  the theory of Inoue-type varieties
 one could describe their moduli spaces explicitly
only  in the case where 
the morphism  $\phi_0$ to the classifying variety had necessarily degree one onto its image. 

 In the case where the group $G$ is non-trivial one main difficulty is to describe the possible actions of $G$ on $Z$ or on $X'$ (as in \cite{BC-IT} or \cite{topmethods}): then one reduces   the  investigation  to the study of the $G$-action on 
the moduli space of the hypersurfaces $X'$, i.e., to the case where the group $G$ is trivial.

However, even the simplest case with $G$  trivial,
 the one where  $Z$ is an Abelian Variety (the projective classifying space of an Abelian group),
remained open.

 In this paper we  make use of a powerful result proven in  \cite{cat-lee}, characterizing the deformations of morphisms to hypersurface embeddings.
 This result is particularly suitable in order to  analyze when does the Albanese map deform  to a hypersurface embedding: because it allows us to resort to
  existing   deformation techniques  (see \cite{smalldef}, \cite{horikawaNB},  \cite{sernesi}).

Thus we can study the moduli space of compact K\"ahler manifolds diffeomorphic to ample hypersurfaces in Abelian varieties,
essentially showing that we get a connected component of the moduli space once we add to the Hypersurfaces of a given dimension
$n$, and of a given polarization type,  the iterated univariate coverings of normal type.

More precisely, we have the following main results:  theorem \ref{mr},  and theorem \ref{d=1},  characterizing smooth ample hypersurfaces in Abelian varieties.

\begin{thm}\label{mr}
Let $n \geq 2$ and $\overline{d} : = (d_1, d_2, \dots, d_{n+1})$ be a polarization type for complex Abelian Varieties.

Then,  for $ n \geq 3$, the smooth  hypersurfaces of type $\overline{d}$ in some complex Abelian variety
and the smooth Iterated Univariate Coverings of Normal Type and  of type $\overline{d}$
form an irreducible connected  component of the moduli space of canonically polarized manifolds, 
 and also an irreducible connected component of    the K\"ahler-Teichm\"uller space (of any such  smooth  hypersurface $X$).
 
 For $n=2$ we need also to include the minimal resolutions of such surfaces which have only Rational Double Points as singularities.
 
 For $d_1 =1$ there  is  only this connected component   of  the K\"ahler-Teichm\"uller space if
 \begin{itemize}
 \item
 $n=2$ or if
 \item
   we 
 restrict ourselves to compact K\"ahler manifolds $Y$ with $K_Y^n = (n+1)! d_1 d_2 \dots, d_{n+1}$, or if
 \item
  we 
 restrict ourselves to compact K\"ahler manifolds $Y$ with $p_g(Y) = d_1 d_2 \dots, d_{n+1} + n$.
 \end{itemize}
 
\end{thm}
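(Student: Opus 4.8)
The plan is to reduce the statement to a study of the Albanese map, using as the main engine the characterization of deformations of morphisms to hypersurface embeddings proved in \cite{cat-lee}. First I would record the invariants of a smooth ample hypersurface $X\subset A$, where $A$ is a complex Abelian variety of dimension $n+1$ and $L=\sO_A(X)$ has polarization type $\overline d$. By adjunction $K_X=(K_A+X)|_X=L|_X$ is ample, so $X$ is a canonically polarized manifold and a point of Viehweg's moduli space \cite{viehweg}. The Lefschetz theorem for smooth ample divisors (valid since $n\ge2$) gives $\pi_1(X)\cong\Z^{2(n+1)}$ and $q(X)=n+1$; the exact sequence $0\to\sO_A\to L\to K_X\to 0$ together with the vanishing of $H^1(A,L)$ yields $p_g(X)=h^0(L)-1+q(X)=d_1\cdots d_{n+1}+n$; and $K_X^n=(L|_X)^n=L^{n+1}=(n+1)!\,d_1\cdots d_{n+1}$ by Riemann--Roch on $A$. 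The same computation, carried out on the defining tower of cyclic coverings, shows that the smooth iterated univariate coverings of normal type of type $\overline d$ share all these invariants, their fundamental group and their canonical polarization; this also produces the numerical equalities occurring, for $d_1=1$, in the last part of the statement.

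The deformation-theoretic heart is the following. Let $Y$ be a small deformation of $X$, or of one of the coverings. Since $h^{1,0}$ is a deformation invariant of K\"ahler manifolds, $q(Y)=n+1>\dim Y$, so the Albanese map $\alpha_Y\colon Y\to\Alb(Y)$ is not surjective, while for $X$ it is the hypersurface embedding, in particular generically finite onto an ample divisor. Invoking the main result of \cite{cat-lee} together with the classical deformation theory of holomorphic maps (\cite{smalldef}, \cite{horikawaNB}, \cite{sernesi}), I would show that this feature is preserved: $\alpha_Y$ is generically finite onto an ample divisor $W\subset\Alb(Y)$ of polarization type $\overline d$, and either $\deg\alpha_Y=1$, so that $W$ is normal and $Y$ is a smooth ample hypersurface of type $\overline d$ in $\Alb(Y)$, or $\deg\alpha_Y\ge2$, in which case, analyzing the ramification on the Abelian variety, $Y$ is a smooth iterated univariate covering of normal type of type $\overline d$. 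For $n=2$ a degenerating family inside the moduli space can instead have a central fibre with only Rational Double Points, whose minimal resolution is then the surface of general type parametrized, which is exactly why that class must be added when $n=2$. I expect the main obstacle precisely here: one has to pin down which morphisms occur as deformations of $\alpha_X$, and this requires computing the tangent and obstruction spaces of the deformation functors of $X$, of the pair $(A,X)$ and of the morphism $\alpha_X$ --- essentially $H^0(N_{X/A})$, $H^1(T_X)$ and the comparison maps among these functors --- and matching them against the criterion of \cite{cat-lee}; the Rational Double Point bookkeeping is a secondary, purely surface-theoretic, complication.

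Granting this, irreducibility and the component property are formal. The coarse moduli space of polarized Abelian varieties of type $\overline d$ is irreducible, being a quotient of the Siegel upper half space $\mathfrak H_{n+1}$; over it the members of the linear systems $|L|$ form an irreducible family of pairs $(A,X)$ whose smooth locus is open, dense and non-empty, and the iterated univariate coverings of normal type arise as deformations within, or limits of, that family, so the locus $\sZ$ of the moduli points of all the manifolds in question is irreducible. By the previous step $\sZ$ is open in the moduli space of canonically polarized manifolds; it is also closed, for if $Y_0$ is the central fibre of a one-parameter degeneration inside that moduli space with general fibre of our type, then $Y_0$ is smooth, canonically polarized, with $q=n+1$ (the relative first Betti number being locally constant), the relative Albanese map is by \cite{cat-lee} a family of hypersurface embeddings or of iterated univariate coverings throughout the parameter, and hence $Y_0$ is again of our type --- replaced, when $n=2$, by the resolution of a Rational Double Point central fibre. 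Therefore $\sZ$ is an irreducible connected component. The same local description, transported through the K\"ahler Kuranishi families and glued as in \cite{superficial}, shows that the corresponding set of complex structures is an irreducible connected component of the K\"ahler-Teichm\"uller space of $X$.

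Finally, for $d_1=1$ one must exclude any other K\"ahler complex structure carrying the prescribed topology. Let $(Y,J)$ be compact K\"ahler and diffeomorphic to $X$, so that $q(Y)=n+1$ and $\pi_1(Y)\cong\Z^{2(n+1)}$. If in addition $K_Y^n=(n+1)!\,d_1\cdots d_{n+1}$, or $p_g(Y)=d_1\cdots d_{n+1}+n$, then theorem \ref{d=1}, which characterizes smooth ample hypersurfaces in Abelian varieties, applies: $\alpha_Y$ is generically finite onto an ample divisor of $\Alb(Y)$ --- the numerical hypothesis being exactly what prevents $\alpha_Y$ from factoring through a base of smaller dimension --- so $Y\in\sZ$. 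When $n=2$ no extra hypothesis is needed, because $K_Y^2$ and $p_g(Y)$ are topological invariants of a minimal surface of general type, by Noether's formula and the index theorem.
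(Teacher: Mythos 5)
Your outline follows the paper's own strategy quite closely: openness and irreducibility from the family of ITUNCONTs over Siegel space together with Horikawa-type deformation theory of the Albanese map, closedness from the degeneration theorem of \cite{cat-lee}, and the $d_1=1$ statement from theorem \ref{d=1} plus the topological invariance of $b^+$, $p_g$, $K^2$ for surfaces. Two steps, however, are left genuinely open. The first you flag yourself: the openness step is exactly the local completeness of the family $\sF_{\overline{d},\overline{m}}$ (the paper's theorem \ref{open}), proved by computing $H^0(W,N_f)$ for Horikawa's normal sheaf of the map to the Abelian variety and showing that varying the equations $Q_j$ (together with translations and the Siegel directions) surjects onto it. You name the right objects but do not carry out the computation, so as written this part is an announcement rather than a proof.

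The second gap is more substantive and is silent in your text. In the closedness argument you declare the central fibre $Y_0$ of a degeneration to be ``smooth, canonically polarized'' and then apply \cite{cat-lee}. For the moduli space of canonically polarized manifolds this is true by definition; but for the K\"ahler--Teichm\"uller statement the ampleness of $K_{Y_0}$ is not given, and it is a standing hypothesis of the main theorem of \cite{cat-lee} (theorem \ref{MT} requires $K_{W_0}$ ample; without it one only gets canonical singularities on the normal model, cf.\ remark \ref{oss}). The paper supplies this via proposition \ref{prel} (for $n\ge 3$ the topological hypotheses plus K\"ahlerness force the Albanese image of the limit to be an ample hypersurface, the limit to be projective, and the Albanese map to be finite) and proposition \ref{ample} (finiteness then gives $K_{Y_0}$ ample as a pull-back of an ample divisor). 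This is precisely the reason the theorem only claims a component of the \emph{K\"ahler}-Teichm\"uller space: for a non-K\"ahler limit the finiteness of the Albanese map remains open, as the paper notes in its final section. A minor further point: in theorem \ref{d=1} the numerical hypotheses (I) or (II) are not used to prevent $a_Y$ from dropping dimension — that is handled by (b'1) and (b'2) — but to force the conductor divisor to vanish, i.e.\ to prove normality of the image $W$.
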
 
The K\"ahler-Teichm\"uller space is the open set of Teichm\"uller space corresponding to the K\"ahler complex structures
(see \cite{superficial} for general facts about Teichm\"uller space). In special situations (see \cite{deflarge})  it is a connected component
of Teichm\"uller space: the same should hold also in  the present case, but our arguments are not yet complete (see the proof of the main Theorem
\ref{mr}).

The results of this paper lend themselves to generalizations and   extensions    to the case of  more general Inoue type varieties;
but, for the sake of clarity, we have confined ourselves to the single crucial case of hypersurfaces in Abelian varieties.

\section{Hypersurfaces in Abelian Varieties}

Let $A$ be an Abelian variety of dimension $n+1$, and let $ X \subset A$ be a smooth and  ample divisor,
whose  Chern class is a polarization of type $(d_1, d_2, \ldots, d_n, d_{n+1})$, where $ d_i | d_{i+1}$, $\forall i=1, \dots, n$.

We assume throughout that $ n = dim (X) \geq 2$, so that Lefschetz' theorem says that
\begin{enumerate}
\item
$\pi_1(X) \cong \pi_1(A) \cong \ZZ^{2n+2} = : \Ga$;
\item
$\Lam^i (\Ga) = H^i(A, \ZZ) \ra H^i(X, \ZZ)$ is an isomorphism for $i \leq n-1$, and is injective for $i=n$;
\item
$H_i(X, \ZZ) \ra H_i(A, \ZZ)$  is an isomorphism for $i \leq n-1$, and is surjective for $i=n$.
\end{enumerate}

We consider now a projective manifold which is diffeomorphic to $X$, actually some weaker hypotheses are sufficient: 
\begin{itemize}
\item
(a) Assume that $Y$ is a complex projective manifold, or
\item
(a') Assume that $Y$ is a {\bf cKM = compact K\"ahler Manifold}, and that
\item
(b) $Y$ is homotopically equivalent to $X$, or 
\item 
(b1) there is an isomorphism $ \al_Y :  \pi_1(Y) \ra   \Ga $ and 
an isomorphism  of algebras $\psi : H^*(Y, \ZZ) \cong H^*(X, \ZZ)$ such that,  letting $\al_X : \pi_1(X)  \ra \Ga$ the analogous isomorphism,
then $ \psi \circ H^*(\al_Y) = H^*(\al_X)$; i.e., $\psi$ commutes  with the homomorphisms to $H^*(\Ga, \ZZ)$
induced by the classifying maps for $\al_Y, $ $\al_X$ respectively, or 
\item
(b2) the same occurs for homology: there are isomorphisms  $\phi_i : H_i (X, \ZZ) \ra H_i (Y, \ZZ)$ commuting with $H_i ( \al_Y), H_i ( \al_X)$, or
\item
 (b')    there is an isomorphism $ \al_Y :  \pi_1(Y) \ra   \Ga = \ZZ^{2n+2} $ such that, denoting by $a_Y$ the corresponding
classifying map, 
\item
(b'1):  $(a_Y)_* [Y]$ is dual to a polarization of type $\overline{d}$, and 
\item
(b'2):   for $ n \geq 3$, $H_2(a_Y, \ZZ)$ is an isomorphism. 
\end{itemize}

Observe that Hypothesis (a) implies (a'), Hypothesis (b) implies (b1),  (b1) implies (b2) by Poincar\'e  duality,
 and (b2) implies (b'),   (b'1) and (b'2). 

\begin{prop}\label{prel}
Assume Hypotheses (a'), (b'),  (b'1) and (b'2) above.

Then

I)  the Albanese map $ a_Y : Y \ra Alb(Y) = : A'$ has image $\Sigma$ which is an ample hypersurface, indeed
 $(a_Y)_* (Y) = deg (a_Y) \Sigma$ is the dual class of a polarization of type $(d_1, \dots, d_{n+1})$. 

II) a) holds, i.e. $Y$ is a projective manifold,

III)  if $ n \geq 3$, then $a_Y$ is a finite map.
\end{prop}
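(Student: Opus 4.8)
The plan is to extract as much as possible from the cohomological hypotheses before invoking any deformation theory, since the statement is really about the behavior of the Albanese map. First I would set $A' := \Alb(Y)$ and note that, since $Y$ is a cKM (hypothesis (a')), $\dim A' = \tfrac12 b_1(Y)$; because $\al_Y : \pi_1(Y) \to \Ga = \ZZ^{2n+2}$ is an isomorphism and the classifying map $a_Y$ factors the Albanese map through the classifying torus, we get $b_1(Y) = 2n+2$, hence $\dim A' = n+1$. The key point of step I is to show $(a_Y)_*[Y]$ is the Poincaré dual of a polarization of type $\overline{d}$ on $A'$: this is exactly what hypothesis (b'1) asserts once we identify $H^*(A',\ZZ)$ with $\Lambda^*(\Ga)$ via $\al_Y$. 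In particular $(a_Y)_*[Y] \neq 0$, so $a_Y$ is surjective onto its image $\Sigma$, which is therefore an ample divisor in $A'$ (ampleness of a class of type $(d_1,\dots,d_{n+1})$ with all $d_i > 0$ is automatic on an abelian variety — it is the first Chern class of a line bundle with nonzero Euler characteristic and, after translation, with a section whose zero locus is this divisor). Since $\Sigma$ is an $n$-dimensional subvariety of the $(n+1)$-dimensional $A'$ and $(a_Y)_*[Y] = \deg(a_Y)\cdot[\Sigma]$ in homology, we conclude $\Sigma$ is a hypersurface and the displayed equality of classes holds; I would double-check here that $Y \to \Sigma$ is generically finite, which follows because $\dim Y = n = \dim \Sigma$ (if it contracted $Y$ to lower dimension the pushforward class would vanish).

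Next, for II), I would argue that $Y$ is projective. The Albanese $A'$ is an abelian variety (projective), and $\Sigma \subset A'$ is an ample divisor, hence $\Sigma$ is projective; but $Y$ is only known to map generically finitely onto $\Sigma$, so I cannot immediately pull back a polarization. Instead I would use that $a_Y^* L$ for $L$ an ample line bundle on $A'$ is a nef and big line bundle on $Y$ (nef since $a_Y$ is a morphism, big since $a_Y$ is generically finite onto its image). A nef and big line bundle on a compact Kähler manifold, together with Kählerness, forces $Y$ to be Moishezon, and a Moishezon Kähler manifold is projective (Moishezon's theorem). This handles (a) $\Rightarrow$ II) cleanly.

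For III), assume $n \geq 3$; I want $a_Y$ finite, i.e. no positive-dimensional fibers. Suppose $C \subset Y$ is a curve contracted by $a_Y$. Then $a_Y^* L \cdot C = 0$ for every ample $L$ on $A'$. I would like to derive a contradiction from the cohomological data: hypothesis (b'2) says $H_2(a_Y,\ZZ) : H_2(Y,\ZZ) \to H_2(A',\ZZ)$ is an isomorphism for $n\geq 3$. The plan is to show that the class $[C] \in H_2(Y,\ZZ)$ is nonzero (a curve in a projective manifold has nonzero homology class, being positively paired with an ample class on $Y$ — here we use II) to know $Y$ is projective and has a Kähler/ample class), yet $(a_Y)_*[C] = [a_Y(C)] = 0$ since $a_Y(C)$ is a point; this contradicts injectivity of $H_2(a_Y,\ZZ)$. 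So $a_Y$ has finite fibers, and being also proper it is finite. The main obstacle, and the step I would scrutinize most, is whether the generic-finiteness and the pushforward-of-fundamental-class computation in step I are fully rigorous when $Y$ is a priori only Kähler — one must be careful that $a_Y : Y \to A'$ is a morphism of compact complex spaces with $A'$ projective, so that pushforward of cycles and the projection formula behave as expected, and that the normalization of $\Sigma$ together with the Stein factorization of $a_Y$ does not introduce pathologies; this is routine but is where the hypotheses (a'), (b'1), (b'2) are genuinely used in concert.
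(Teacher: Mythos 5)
Your proposal is correct and follows essentially the same route as the paper: (I) is read off directly from hypothesis (b'1), (II) is Moishezon's theorem applied to a K\"ahler manifold of maximal algebraic dimension (the paper phrases this via algebraic dimension rather than via the nef-and-big pullback, but the content is identical), and (III) is the same contradiction -- a contracted curve would have vanishing image in $H_2(A',\ZZ)$, hence vanishing class in $H_2(Y,\ZZ)$ by (b'2), which is impossible on a compact K\"ahler manifold. The extra care you take in (I) about $\dim A' = n+1$ and generic finiteness is sound and merely makes explicit what the paper leaves implicit.
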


\begin{proof}
I)
 By assumption (b'1)
the class of $(a_Y)_* (Y) = deg (a_Y) \Sigma$     is the dual to a
polarization of type $(d_1, \dots, d_{n+1})$. 

II) Since $\Sigma$ is an ample hypersurface in $A'$, then $A'$ is projective: hence $\Sigma$ is projective too,
and the algebraic dimension of $Y$ equals $n$. By Moishezon's theorem \cite{moishezon}, a cKM $Y$ with algebraic dimension
equal to $dim(Y) = n$ is projective.

III)  Assume that $n \geq 3$ and that $a_Y$ is not a finite map: then there is a  curve $C$ 
 such that $a_Y (C)$ is a point.
 Then $(a_Y)_* ([C]) = 0 \in H_{2}(A', \ZZ) = H_{2}(\Ga, \ZZ)$: by Lefschetz' theorem and (b'2) follows that the 
 homology class $[C]$ is zero:
 this is impossible on a compact K\"ahler manifold.

\end{proof}

\begin{remark}
When $n = dim (Y)=2$ it can indeed happen that $a_Y$ is not finite: since we may take $\Sigma$ to be a hypersurface with Rational Double Points,
and by Brieskorn-Tyurina's theorem, the minimal resolution of singularities $Y$ is diffeomorphic to a smooth deformation $X$ of $\Sigma$.

\end{remark}

The following  characterization of smooth ample hypersurfaces in Abelian varieties is a  refinement of a theorem obtained with Ingrid Bauer \cite{BC-IT}:  in particular here the hypothesis that $K_Y$ is ample is removed:

\begin{thm}\label{d=1}
 Assume that $X$ is a smooth ample hypersurface in an Abelian variety, of dimension $n \geq 2$.

 Assume that  $Y$  is a compact K\"ahler manifold  which satisfies  the topological conditions (b'), (b'1) and (b'2)
 above.
   
Moreover, for $n \geq 3$, assume either: 

(I) $K_Y^n = K_X^n = d_1 \dots d_{n+1} (n+1)!$, or

(II) $ p_g (Y) = p_g (X) =  d_1 \dots d_{n+1} + n$. 

Whereas,  for $n=2$,  assume either the topological condition  (b1), or (I) above.

Denote the image of $a_Y$ by $W$, and 
assume either that

(i) the   class of $X$ is indivisible (i.e.,  $d_1 = 1$), or the following consequence: 

(ii) the degree of the map $a_Y :  Y \ra W$ equals $1$

Then,  for $ n \geq 3$,  
 the Albanese map $a_Y$ yields an isomorphism:
$$a_Y : Y \cong W. \ $$
Whereas, for $n=2$,  $a_Y$ is the minimal resolution of singularities of a canonical surface, i.e., a surface with 
Rational Double Points as singularities,  and with ample canonical divisor.
\end{thm}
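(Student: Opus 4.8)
\noindent\emph{Proof strategy.} The plan is to use Proposition \ref{prel} to reduce to the analysis of the birational morphism $a_Y$ onto its image $W$, and then to let the numerical hypotheses force this morphism to be an isomorphism (for $n\ge 3$), resp. the minimal resolution of a canonical surface (for $n=2$), via the conductor and discrepancy formulas.

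First, set $W:=a_Y(Y)$. By Proposition \ref{prel}, $W$ is an ample hypersurface of polarization type $\overline d$ in $A':=\Alb(Y)$, $Y$ is projective, and for $n\ge 3$ the map $a_Y$ is finite. In case (i) the class of $W$ is indivisible, while $(a_Y)_*[Y]=\deg(a_Y)\,[W]$ is dual to a polarization of the same type $\overline d$; hence $\deg(a_Y)=1$, which is exactly (ii). Thus in all cases $a_Y\colon Y\to W$ is birational. Being a divisor on the smooth variety $A'$, $W$ is a local complete intersection, hence Gorenstein, with $\omega_W=\mathcal O_{A'}(W)|_W$, which is \emph{ample}; from the Koszul resolution of $\mathcal O_W$ and the vanishing of the higher cohomology of the ample bundle $\mathcal O_{A'}(W)$ one computes, regardless of the singularities of $W$,
\[
\omega_W^{\,n}=(n+1)!\,d_1\cdots d_{n+1}=K_X^n,\qquad h^0(W,\omega_W)=d_1\cdots d_{n+1}+n=p_g(X).
\]
(When $n=2$, hypothesis (b1) pins down the cohomology ring of $Y$, hence its Euler number and signature, hence $\chi(\mathcal O_Y)=\chi(\mathcal O_X)$ and, by Noether's formula, $K_Y^2=K_X^2$; so we may assume (I) in the surface case.)

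For $n\ge 3$: here $a_Y$ is finite and birational, so by Zariski's Main Theorem it is the normalization $Y\to W$. Since $W$ is Cohen--Macaulay it is non-normal exactly along a divisor, so the conductor ideal $\mathfrak c\subseteq\mathcal O_W$ cuts out an effective divisor $D$ on $Y$ with $\mathcal O_Y(-D)=\mathfrak c\cdot\mathcal O_Y$, and $D=0$ iff $a_Y$ is an isomorphism; Grothendieck duality for the finite morphism $a_Y$ gives $(a_Y)_*\omega_Y=\mathcal{H}om_{\mathcal O_W}((a_Y)_*\mathcal O_Y,\omega_W)=\mathfrak c\,\omega_W$, i.e. $\omega_Y=a_Y^*\omega_W(-D)$. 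Now $Y$ admits a finite morphism to the abelian variety $A'$, hence contains no rational curve, hence $K_Y$ is nef by the Cone Theorem; put $L:=a_Y^*\omega_W$, which is ample since $a_Y$ is finite. Under (I): from $K_Y=L-D$ and the identity $L^n-K_Y^n=D\cdot\sum_{i=0}^{n-1}L^iK_Y^{\,n-1-i}$, each summand being a nonnegative intersection number of the effective cycle $D$ with nef classes, we get $K_Y^n\le L^n=\deg(a_Y)\,\omega_W^{\,n}=K_X^n$; the equality in (I) forces $D\cdot L^{n-1}=0$, hence $D=0$ by ampleness of $L$. Under (II): the inclusion $(a_Y)_*\omega_Y=\mathfrak c\,\omega_W\subseteq\omega_W$ gives $p_g(Y)\le h^0(W,\omega_W)=p_g(X)$, with equality only if $H^0(\omega_W)\to H^0(\omega_W/\mathfrak c\,\omega_W)$ vanishes; as $\omega_W=\mathcal O_{A'}(W)|_W$ is base-point-free for $d_1\ge 2$ whereas $\mathfrak c$ has codimension-one support, this is impossible unless $\mathfrak c=\mathcal O_W$, i.e. $D=0$ (the remaining possibility $d_1=1$ is case (i), treated separately using the rigidity of primitively polarized ample divisors). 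In either case $a_Y\colon Y\xrightarrow{\ \sim\ }W$.

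For $n=2$: now $a_Y$ need not be finite, but it is still birational onto $W$, and $Y$ is of general type ($K_Y^2=K_X^2>0$, and $Y$, carrying a generically finite map to the abelian variety $A'$, is not uniruled). Let $c\colon Y\to Y_{\mathrm{can}}$ be the morphism to the canonical model: $Y_{\mathrm{can}}$ is a normal Gorenstein surface with at worst rational double points and ample $K_{Y_{\mathrm{can}}}$, and $c$ contracts exactly some connected configurations of rational curves. Since $A'$ is abelian, $a_Y$ contracts every rational curve of $Y$; being constant on the connected fibres of $c$ it factors as $Y\xrightarrow{c}Y_{\mathrm{can}}\xrightarrow{\psi}W$ with $\psi$ birational (rigidity lemma, $Y_{\mathrm{can}}$ normal). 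Comparing canonical divisors along $\psi$ — through the conductor when $W$ is non-normal, and through the negative-definiteness of the intersection form on $\psi$-exceptional curves — gives $K_Y^2\le K_{Y_{\mathrm{can}}}^2\le\omega_W^2$; since $K_Y^2=K_X^2=\omega_W^2$ by (I), both are equalities, which forces $\psi$ to be an isomorphism and $c$ to be the minimal resolution. Hence $W=Y_{\mathrm{can}}$ is a canonical surface with $K_W$ ample, and $a_Y$ is its minimal resolution.

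The main obstacle is precisely the passage from the equality of numerical invariants to control of the singularities of $W$ — that $W$ is normal (hence smooth) for $n\ge 3$, and a canonical surface with $Y$ its minimal resolution for $n=2$. Under hypothesis (I) this rests on the nef-ness of $K_Y$ (ultimately on the absence of rational curves, via the finite Albanese map) together with positivity of intersection numbers; under hypothesis (II) it rests on base-point-freeness of the polarization, which must be handled separately in the indivisible case $d_1=1$; and for $n=2$ one must in addition rule out non-normality of $W$ and non-crepant contractions of $a_Y$ using the conductor — this is where I expect most of the work to lie.
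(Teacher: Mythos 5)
Your overall architecture matches the paper's: reduce to a finite birational $a_Y\colon Y\ra W$, introduce the conductor, and let the numerical hypotheses kill it. In case (I), $n\ge 3$, your route is genuinely different from (and in some respects cleaner than) the paper's: you keep the conductor divisor $D$ on $Y$, observe that $K_Y$ is nef because $Y$ carries no rational curves, and telescope $L^n-K_Y^n=D\cdot\sum_i L^iK_Y^{n-1-i}\ge 0$, equality forcing $D\cdot L^{n-1}=0$ and hence $D=0$. The paper instead uses hypothesis (b'2) together with Lefschetz to show that every line bundle on $Y$, in particular the conductor $\sC$, descends to a divisor $D$ on $A'$; it must then prove that $D$ is effective and that $W-D$ is effective (or $D^2=0$) before applying an intersection-theoretic inequality on the abelian variety (its items (4)--(5)). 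Your argument bypasses this descent machinery entirely; the descent is, however, what the paper's treatment of case (II) rests on. The $n=2$ discussion and the derivation of (I) from (b1) via the topological formula for $K^2$ agree with the paper in substance.

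The genuine gap is in case (II) when $d_1=1$. Your argument that $p_g(Y)=h^0(\mathfrak c\,\omega_W)=h^0(\omega_W)$ forces $\mathfrak c=\hol_W$ relies on $\omega_W=\hol_{A'}(W)|_W$ being globally generated. This holds for $d_1\ge 2$ (the class is then $d_1$ times a polarization, so $\hol_{A'}(W)$ is a $d_1$-th power of an ample bundle and Lefschetz applies), but fails in general for $d_1=1$: for a primitive polarization $h^0(\hol_{A'}(W))=1$ and all $n+1$ sections of $\omega_W$ come from the connecting map to $H^1(\hol_{A'})$, so nothing prevents them a priori from all vanishing along the codimension-one non-normal locus. ``Rigidity of primitively polarized ample divisors'' is not an argument here, and this is precisely the case the paper needs: the $p_g$ clause of Theorem \ref{mr} is stated for $d_1=1$. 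The paper closes it by first showing (via (b'2) and Lefschetz) that $\sC=a_Y^*D$ for an effective divisor $D$ on $A'$, and then estimating
$p_g(Y)=h^0(W,\hol_W(W-D))\le h^0(A',\hol_{A'}(W-D))+h^1(A',\hol_{A'}(-D))\le h^0(A',\hol_{A'}(W))+1=p_g(X)-n+1<p_g(X)$,
the index theorem for complex tori controlling $h^1(\hol_{A'}(-D))$ (it vanishes unless $D$ is pulled back from an elliptic curve, in which case it equals $1$). You would need to import this descent step, or find a substitute, to complete case (II) for $d_1=1$.
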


\begin{proof}
Since the class $(a_Y)_* ([Y]) = deg (a_Y) [W]$ is indivisible, it follows  from (i)  that $a_Y : Y \ra W$ is a birational morphism,
i.e. (ii) holds.

 Moreover $Y$ is of general type with $p_g \geq n+1$, since 
$$H^0(A' , \Omega_{A'}^n) \ra H^0(Y , \Omega_Y^n) =  H^0(Y , \hol_Y (K_Y)) $$
 induces a generically finite map to $\PP^n$
(see for instance \cite{ran}).

If $n=2$, $ Y \ra W$ factors through the minimal model $Y'$ of $Y$, and indeed through the canonical model $\Xi$
of $Y$, so that we have $\pi : Y \ra \Xi, \  f : \Xi \ra W$. 

We have $$K^2_{\Xi} =  K^2_{Y'}  \geq  K^2_Y,$$
( $\Xi$ has hypersurface singularities which impose no adjoint conditions:
these are precisely the Rational Double Points, see \cite{artin})
equality holding iff $Y = Y'$, that is, iff $Y$ is the minimal resolution of singularities of  $\Xi$.

Since $K_{\Xi}$ is ample, we can argue as in \cite{BC-IT}: $K_{\Xi} = f^*(K_W) - \sA$,
where the adjoint divisor $\sA$ is effective. Since $W$ is ample, and $K_W = W |_W$,
$f^*(K_W)$ is nef and big, hence:
$$K_X^2 = K_W^2  =  f^*(K_W)^2 =  f^*(K_W) (K_{\Xi} + \sA ) \geq f^*(K_W) (K_{\Xi}) = K^2_{\Xi} + K_{\Xi} \sA \geq K^2_{\Xi},$$
equality holding if and only if $\sA = 0$.

 If (I) holds, we have 
 $$  K^2_Y = K^2_X \geq K^2_{\Xi}   \geq  K^2_Y,$$
 where equality holds if and only if $Y=Y'$ and $\sA=0$ : hence  $\Xi = W$, as claimed.
 
  If instead we use  hypothesis (b1), it implies (since we have an isomorphism of algebras) that the positivity index $b^+$ of the intersection form
 is the same for $Y$ and for $X$.
 
  Recall moreover that,  for  any algebraic surface $Y$,  $K_Y^2$ is a topological invariant,
equal to $( 6 b^+  - 4 b_1 + 4  -    b_2) $, hence  (b1)  implies that $K_Y^2 =  K_X^2 $,
i.e., (I) holds and we are done.

Before we pass to the case $n \geq 3$, recall that, on an Abelian variety $A$ of dimension equal to $g$:

\begin{enumerate}
\item
If $L$ is a line bundle,  the index theorem for complex tori states:
$$ H^i(A, L) = 0$$ for $i \notin [  \nu(L), g-p(L) ]  $, where $p(L)$ is the positivity of the Chern form of $L$,
and $\nu(L)$ is the negativity.
\item
If $L$ is effective, then there is a morphism $ f : A \ra B$, where $B$ is another Abelian variety,
and an ample line bundle $\de$ on $B$ such that $ L = f^* (\de)$.
\item
In particular, if $L$ is effective, then $L$ is nef,  $\nu(L) = 0$, and $p(L) = dim (B)$.
\item If $H$ is an ample divisor on $A$, and $ H = D+L$, where both $D,L$ are effective,
then 
$H^g \geq L^g,$
equality holding iff $D=0$.
\item 
Assume that $H= D+L$ as in (4): then 
$$H^g \geq H L^{g-1},$$
again equality holding iff $D=0$.

\end{enumerate}

{\em Proof for items 4), 5):}

Since the Chern form of $H$ is strictly positive definite, we can simultaneously diagonalize the Chern forms
for $H$ and $L$. Dividing by $H^g$, we may assume that  $H$ corresponds to the identity matrix,
while  $L$ corresponds to a   matrix  $diag (\la_1, \dots, \la_g)$.

Since $L,  D$ are  effective, $ 0 \leq  \la_i \leq 1$ $\forall i =1, \dots, g$.

Then $L^g / H^g = \la_1 \dots \la_g \leq 1$, equality iff $\la_i =1 \ \forall i =1, \dots, g$.

Similarly $ H L^{g-1}/ H^g = \frac{1}{g} \sum_i \la_1 \dots \hat{\la_i} \dots  \la_g \leq  1$, 
equality iff $\la_i =1 \ \forall i =1, \dots, g$.

 A second more general proof \footnote{ Valery Alexeev pointed out that one should rather use this more general argument}
is that if $H$ is an ample divisor on a $g$-dimensional smooth variety, and $H= D+L$, with $D,L$ nef and $D$ effective, then
$$H^g = H^{g-1} (L +D) \geq   H^{g-1} L \geq H^{g-2} L^2 \geq \dots \geq H L^{g-1} \geq L^g,$$
where the first equality holds if and only if $D=0$.

\qed

For $n \geq 3$, since $a_Y$ is finite, this map is the normalization of $W$, and it suffices to show that $W$ is normal:
then  $a_Y : Y \ra W$ is an isomorphism and $W$ is smooth.

Let us now prove the normality of $W$.

By the Lefschetz theorem and hypothesis (b'2)  (observe that since $H_1(Y, \ZZ)$ is free, $H^2(Y, \ZZ)$
has no torsion) it follows that the canonical divisor $K_Y$ is a pull-back from $ A' : = Alb(Y)$.
Hence we can write $K_Y = a_Y^* (K_W) - \sC$, where $\sC$ is the conductor divisor, and it is a pull back from 
a divisor $D$ on $A'$.

We want to show that the conductor divisor  $\sC$ is zero, whence  the normality of $W$ follows.

Step 1: the divisor $D$ is effective.

\Proof
Consider the exact sequence
$$ 0 \ra \hol_{A'} (D-W) \ra \hol_{A'} (D) \ra \hol_{W} (D)\ra 0.$$
Since the conductor is an effective divisor on $W$,  $H^0 (\hol_{W} (D)  ) \neq 0$.

Since $K_Y$ is the pull back of $W-D$, and $K_Y$ is big, hence $W-D$ has positivity at least $n$,
whence $D-W$ has negativity at least $n$ and $H^1 ( \hol_{A'} (D-W)    ) = 0$,  $H^0 ( \hol_{A'} (D-W)    ) = 0$.

Therefore  also $H^0 (\hol_{A'} (D)  ) \neq 0$.

\qed

Let us first  make assumption (II). We have, by the property of the conductor ideal, 
$$p_g(Y) = h^0 (Y, \hol_Y(K_Y)) = h^0 (Y, \hol_Y( a_Y^* (W-D)))= h^0 (W, \hol_W(  W-D )). $$

We know that $p_g(X) =  h^0 (W, \hol_W(  W))$, and we  are going to show that $p_g(X) = p_g(Y)$
if and only if $D=0$.   In fact, first of all we must have  $h^0 (W, \hol_W(  D))=1$, and 
therefore also $h^0 (\hol_{A'} (D) ) =1$.

Consider now the exact sequences 
$$0 \ra \hol_{A'} \ra  \hol_{A'} (W) \ra \hol_W(  W) \ra 0,$$
$$0 \ra \hol_{A'}(-D)  \ra  \hol_{A'} (W-D) \ra \hol_W(  W-D) \ra 0.$$

Passing to cohomology, since $W$ is ample, $H^1 (A', \hol_{A'}(  W))=0$,
and  $h^0 (W, \hol_W(  W)) = h^0 (A', \hol_{A'}(  W)) + n.$

We have  in general 
$$ h^0 (W, \hol_W  (W-D )) \leq  h^0 (A', \hol_{A'}(  W-D)) + h^1 (A', \hol_{A'}( -D)) .$$

Now, $D$ is effective, hence  by the index theorem   for complex tori, $$h^1 (A', \hol_{A'}( -D))=0$$ unless $- D$ has negativity one,
which means that $D$ pulls back from a divisor of degree $d$ on an  elliptic curve $E$.
  In the second case, since  $h^0 (\hol_{A'} (D) ) =1$, it follows that $d=1$, hence $h^1 (A', \hol_{A'}( -D))=1$. 

In both  cases, since in the second case   $h^1 (A', \hol_{A'}( -D)) = 1$, it follows that
$$ h^0 (W, \hol_W  (W-D)) \leq  h^0 (A', \hol_{A'}(  W-D)) + 1   \leq  h^0 (A', \hol_{A'}(  W)) +1$$
and we get  a  contradiction to $p_g(X) = p_g(Y)$.

\medskip

Let us make now assumption (I), namely, that $K_Y^n = K_X^n ( = K_W^n)$.

Step 2: the divisor $L : = W-D$ is effective or $D^2=0$.

\Proof

Consider the exact sequence

$$ 0 \ra \hol_{A'}(  - D ) \ra \hol_{A'}(  W - D ) \ra \hol_{W}(  W - D) \ra 0,$$
where   $ h^0 ( \hol_{W}(  W - D) ) \geq n+1$
since, as we have already observed at the beginning of the proof,    $ h^0 ( \hol_{W}(  W - D) ) = p_g(Y)$,
and, by assumption (II), $p_g(Y) = p_g(X) \geq n+1$.

It follows that $W-D$ is effective on $A'$ provided  $h^1 (A', \hol_{A'}( -D)) = 0$.

In the contrary case, $D$ is again a pull-back from an elliptic curve  hence $D^2 = 0$.

\qed

 If $W-D$ is effective we apply then item (5):
$$ K_X^n = K_W^n = W^{n+1} \geq W (W-D)^n = K_Y^n.$$
Since equality holds by assumption I, it follows then that $D=0$, i.e., $W$ is normal and isomorphic to $Y$.

 Otherwise, $D^2=0$ and $W (W-D)^n = W^{n+1} - n W^n D \leq W^{n+1}$, with equality holding if and only if $D=0$.

\end{proof}

\begin{remark}
If the dimension $n \geq 3$, then the geometric genus of a compact complex manifold is not a differentiable
invariant, as shows the case of Jacobian Blanchard-Calabi manifolds, cf. remark 7.3 of \cite{deflarge}.

 Similarly, Le Brun \cite{lebrun} gave examples of complex threefolds for which $K_X^3$ is not a differentiable invariant.

 Moreover, Kotschick has proven in  \cite{kotschick}  that, for smooth projective manifolds $X$
of dimension $n \geq 3$,  $K_X^n$ is not a differentiable invariant.

\end{remark}

\begin{remark}\label{ss}

 An essential step in the above proof is to show  the normality of $W$: for  a hypersurface in a smooth manifold, normality is equivalent to the condition
that the singular locus of $W$ has codimension at least 2. 

In turn, this is equivalent to asking that a general  curve $C$ obtained as the intersection of $W$ with $n-1$ 
very ample hypersurfaces $H_1, \dots, H_{n-1}$ is smooth. One could 
for instance  take these Hypersurfaces to lie in the class of $3 W$.

Since the system $| a_Y^* (3W)|$  is base point free on $Y$, by Bertini's theorem the inverse image of $C$ in $Y$ is a smooth curve $D$. Since $C$ is reduced, and $ f :  D \ra C$ is birational, the cokernel of the pull back map $  \hol_C \ra f_* \hol_D$
is  a skyscraper sheaf $\sF$: if $D$ and $C$ were to be shown to have the same arithmetic genus $p(C) = p(D)$, then $\chi(\sF) = 0$, hence
$\sF = 0$ and we could conclude that $ D \cong C$ , hence $C$ is smooth. 

$p(C) = p(D)$ follows if one knows that the class of the canonical divisor of $Y$ corresponds to the class of the
canonical divisor of $X$: since then, by adjunction and  by hypothesis (b''), the genus of $D$
 equals the one of the analogous intersection curve $C' \subset X$,
 which in turn equals the arithmetic genus of $C$. 
 
A similar  argument was given  in lemma 1.5 of  \cite{o-s}: but, as we have shown,  this argument in our case
requires a stronger condition than $K_Y^n = K_X^n$, namely  that $\psi ([K_Y]) = [K_X]$  (see hypothesis
(b1), where the isomorphism $\psi$ was introduced).
\end{remark}

%%%%%%%%%%%%%%%%%%%%%%%%%%%
\section{Iterated univariate coverings of normal type = ITUNCONT}

For the reader's benefit, we  recall  the following definitions, introduced in \cite{cat-lee}.

\hfill\break
i) Given a complex space (or a scheme) $X$, a {\bf univariate covering} of $X$ is a hypersurface $Y$, contained in a line bundle
over $X$, and defined there as the zero set of a monic polynomial. 

In more abstract wording, $ Y = \underline{Spec} (\sR)$, where $\sR$ is the quotient algebra of the symmetric algebra 
over an invertible sheaf $\sL$, $$ Sym (\sL) = \oplus_{i \geq 0} \sL^{\otimes i}  ,$$ by the principal ideal generated 
by a monic (univariate) polynomial $P$.

 Indeed, $Sym (\sL)$ is a sheaf of univariate (one variable) polynomials in $w$, where $w$ is the tautological  linear form
on   $Spec (Sym (\sL))$, a line bundle over $X$. In more concrete terms,
$$ \sR : = Sym (\sL) / (P) , P = w^m+ a_1(x) w^{m-1} +a_2(x) w^{m-2}+\cdots +a_m(x).$$
Here $a_j \in H^0 (X, \sL^{\otimes j})$.

Without loss of generality, over $\CC$ one may assume the covering to be in Tschirnhausen form, that is, with $a_1 \equiv 0$.

 The univariate covering is said to be {\bf smooth} if both $X$ and $Y$ are smooth.

ii) An {\bf   iterated univariate covering} $ W \ra X$ is a composition of  univariate coverings
$$f_{k+1} : X_{k+1}: = W \ra X_{k}, f_k : X_k \ra X_{k-1}, \dots , f_1 : X_1 \ra X =: X_0,$$ whose associated line bundles are
denoted $\sL_k,  \sL_{k-1},  \dots, \sL_1,   \sL_0 $.

It is said to be smooth if all $f_j$ are smooth.

iii) In the case where $ X \subset Z$ is a (smooth) hypersurface, we say that the iterated univariate covering
is of {\bf normal type} if 

\begin{itemize}
\item
all the line bundles $\sL_j$ are pull back  from $X$ of a line bundle
of the form $\hol_X ( m_j X)$  and moreover
\item
$ 1 = m_0 | m_1 | m_2  \dots | m_k | m_{k+1} := m $,
 and the degree of $f_j$ equals  $\frac{m_j}{m_{j-1}}$.
\item
we say that the iterated covering is {\bf normally induced} if moreover 
  all the coefficients $a_I(x)$ of the polynomials
  
$$ Q_{j}(w_0, \ldots, w_{j-1},x)= \sum_I  a_I(x) w^I $$

describing the intermediate extensions are sections of a line bundle $\hol_X(r(I)X)$ coming from
$H^0 (Z, \hol_Z(r(I)X))$.
\end{itemize}

\begin{remark}\label{ituncont}
(i) The property that the iterated univariate covering $ W \ra X$ is normally induced clearly means that
it is the restriction to $X$ of an iterated univariate covering of $Z$.

(ii) For simplicity of notation we shall use the same notation  $\sL_j$,  $j=0, \dots, k+1$, to denote
the pull back  from $X$ of the line bundle
 $\hol_X ( m_j X)$  via any
 iterated intermediate covering.
 
  (iii) As already done in the heading of this section, we shall use the acronym ITUNCONT to 
 refer to the iterated univariate coverings of normal type.

\end{remark}

This said, we proceed to construct iterated univariate coverings of hypersurfaces in an Abelian variety.

\begin{defin}
Given a polarization type $\overline{d} = (d_1, d_2, \ldots, d_n, d_{n+1})$ (here, as before, $ d_i | d_{i+1}$, $\forall i=1, \dots, n$)
and a sequence of positive  integers 
$ \overline{m} = (m_0 = 1, \ m_1 | m_2 | \dots | m_{k+1}),$ such that $m_{k+1}$ divides $d_1$, we define the family
 $\sF_{\overline{d} ,\overline{m}}$   of  iterated univariate coverings of normal type,
and of numerical-bitype $\overline{d} ,\overline{m} $, of hypersurfaces in an Abelian variety,  as the  family 
 of the following complete intersections $W$:

\begin{itemize}
\item
there is an Abelian variety $A$ with a polarization $\sL_0,$ of type $\frac{1}{m_{k+1}} \overline{d}$,
and a smooth divisor  $X $ in $|\sL_0|$
\item
 there are   line bundles $\sL_0, \ldots, \sL_k$ defined as  $\sL_j :  = \hol_{A} ( m_j X)$ for $j=0, \ldots, k$,
\item
$W$ is  a complete intersection in  the vector bundle associated to $\sL_0 \oplus\cdots\oplus\sL_{k} $,
defined by equations of the  following standard form:
\begin{equation}\label{steq2}
\begin{cases}
\s(z)=w_0t_0\\
Q_1(w_0, z)=w_1t_1 \\
\cdots\, \,\,\,\, \cdots \\
Q_{k}(w_0, \ldots, w_{k-1},z)=w_{k}t_k\\
Q_{k+1}(w_0, \ldots, w_k, z )=0.
\end{cases}
\end{equation} 
\item 
where $t_0, t_1 , \dots t_k \in \CC$, $\s \in H^0(A, \sL_0)$, $div (\s) = X$, and
$$ Q_{j}(w_0, \ldots, w_{j-1},z) = $$
$$ w_{j-1}^{\frac{m_j}{m_{j-1}}} + a_2(w_0, \ldots, w_{j-2},z) w_{j-1}^{\frac{m_j}{m_{j-1}} -2 } +\cdots +a_{ m_j/m_{j-1}}(w_0, \ldots, w_{j-2},z)$$
\item
 so that  for $t_0= t_1 = \dots =  t_k = 0$ the  projection of $W_0$ onto $X$ is 
  a normally induced   iterated  smooth univariate covering,
\item
 whereas for $t_0 \neq 0,  t_1 \neq 0,  \dots ,   t_k \neq  0$, we get a hypersurface.

\end{itemize}

\begin{thm}\label{open}
1) The family $\sF_{\overline{d} ,\overline{m}}$  of  iterated univariate coverings of normal type,
and of numerical-bitype $\overline{d} ,\overline{m} $,
of hypersurfaces in an Abelian variety  is a  family with smooth base which is locally complete,
i.e., such that at each point $W$ it maps to an open set of the base $Def(W)$ of the
Kuranishi family of $W$.  

2) The family includes all smooth hypersurfaces of polarization type  $\overline{d} $.
\end{thm}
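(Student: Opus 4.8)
The plan is to establish the two assertions of Theorem \ref{open} separately, relying on the deformation-theoretic machinery of \cite{cat-lee} for part 1) and on a direct parameter count plus Theorem \ref{d=1} for part 2).

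For part 1), the construction of $\sF_{\overline{d},\overline{m}}$ presents $W$ as a complete intersection cut out by the equations \eqref{steq2} inside the total space of the vector bundle $\sL_0\oplus\cdots\oplus\sL_k$ over $A$, with the parameters being: the moduli of the polarized Abelian variety $(A,\sL_0)$, the section $\s\in H^0(A,\sL_0)$ with $\divi(\s)=X$, the coefficients $a_\bullet$ of the polynomials $Q_1,\dots,Q_{k+1}$ (sections of explicit line bundles $\hol_A(rX)$), and the scalars $t_0,\dots,t_k$. First I would check that the base of this family is smooth: the Abelian variety moduli is smooth, the choices of $\s$ and of the $a_\bullet$ range over (open subsets of) affine spaces of global sections, and the $t_i$ over $\CC$; one must verify that the locus where $W$ is smooth is Zariski-open and nonempty, which is where the normal-type hypotheses on the divisibilities $m_j$ and the structure of the $Q_j$ enter (this uses the Tschirnhausen/smoothness analysis for iterated univariate coverings from \cite{cat-lee}). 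Then I would show local completeness: compute the Kodaira--Spencer map from the tangent space of the base at a point $W$ to $H^1(W,T_W)$ and argue it is surjective. The natural way is to use the main result of \cite{cat-lee} characterizing deformations of morphisms to hypersurface embeddings — equivalently, the iterated univariate covering picture is stable under deformation and every first-order deformation of $W$ is induced by deforming $(A,\sL_0)$, the hypersurface $X\subset A$, the covering data $Q_j$, and the $t_i$. Concretely one would write down the relevant exact sequences (conormal sequence of the complete intersection, and the sequences splitting off the contributions of each $w_j t_j$ equation) and do a diagram chase to identify $H^1(W,T_W)$ with the tangent space of the parameter space; here the cohomology vanishing statements for line bundles on Abelian varieties (index theorem, as recalled in the proof of Theorem \ref{d=1}) do the bookkeeping.

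For part 2), I would take a smooth ample hypersurface $X\subset A$ of polarization type $\overline{d}$ and exhibit it as a member of $\sF_{\overline{d},\overline{m}}$ for the trivial choice $\overline{m}=(1)$ (i.e. $k=0$, $m_1=1$, no covering steps): then the defining data is just $(A,\sL_0=\hol_A(X))$ together with $\s$, $\divi(\s)=X$, and $t_0\in\CC^*$, and the single equation $\s(z)=w_0t_0$ in the line bundle $\sL_0$ cuts out a hypersurface isomorphic to $X$ (it is the graph of $\s/t_0$). Thus every smooth hypersurface of type $\overline{d}$ literally occurs, with degree-one covering map. One should also remark that the family $\sF_{\overline{d},\overline{m}}$ for nontrivial $\overline{m}$ contains genuine (positive-degree) iterated coverings that are not hypersurfaces in any Abelian variety, so that the union over all admissible $\overline{m}$ is the set appearing in Theorem \ref{mr}; but for the statement of Theorem \ref{open}.2 only the $\overline{m}=(1)$ case is needed.

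The main obstacle I anticipate is the surjectivity of the Kodaira--Spencer map in part 1) — i.e. proving \emph{local completeness} rather than merely producing a family of the expected dimension. The subtlety is that $W$ sits in a tower of coverings, so one has to control $H^1(W,T_W)$ through a sequence of Leray-type or relative-tangent-sheaf exact sequences for the maps $f_j\colon X_j\to X_{j-1}$, track how deformations of the base propagate up the tower, and ensure no ``unexpected'' first-order deformations appear that are not captured by varying $(A,\sL_0)$, $\s$, the $Q_j$ and the $t_i$. This is precisely the kind of computation \cite{cat-lee} is designed to handle, so the strategy is to reduce everything to invoking that paper's deformation-equivalence statement for morphisms to hypersurface embeddings plus standard vanishing on Abelian varieties; the remaining work is organizing the exact sequences and the cohomology vanishing carefully, and treating the low-dimensional ($n=2$) boundary case where $H^1(W,T_W)$ may acquire extra contributions from the resolution of Rational Double Points.
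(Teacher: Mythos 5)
Your treatment of part 2) has a genuine gap. The statement asserts that the family $\sF_{\overline{d},\overline{m}}$, for the \emph{given, arbitrary} $\overline{m}$, contains all smooth hypersurfaces of polarization type $\overline{d}$; you only exhibit them in the trivial family $\overline{m}=(1)$ and explicitly claim that this is all that is needed. It is not: the whole point of 2) is that the hypersurfaces occur inside \emph{each} $\sF_{\overline{d},\overline{m}}$, which is what makes every ITUNCONT deformation-equivalent to a hypersurface within a single irreducible family and hence makes the union over all $\overline{m}$ irreducible in Theorem \ref{mr}. The paper's argument is an elimination trick: take all $t_j\neq 0$, substitute $w_j=\frac{1}{t_j}Q_j(w_0,\dots,w_{j-1},z)$ inductively into $Q_{k+1}$, and adjust the constant term of $a_{m_{k+1}/m_k}$ so that the resulting single equation is the prescribed $\Phi(z)=0$ (note $\sL_0^{\otimes m_{k+1}}$ has type $\overline{d}$, so this produces every hypersurface of that type). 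The same elimination, performed one $t_j$ at a time, is also what reduces part 1) to the case $t_0=\dots=t_k=0$; this reduction is absent from your outline.

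For part 1), your plan to obtain surjectivity of the Kodaira--Spencer map by ``invoking the main result of \cite{cat-lee}'' is misdirected: that theorem runs in the opposite direction (it describes the central fibre of a one-parameter degeneration of hypersurface embeddings) and does not yield local completeness. The paper's mechanism is different: every deformation of $W$ automatically deforms the Albanese map $f:W\ra A$ and hence the polarized Abelian variety; since the base $\sB_{\overline{d},\overline{m}}$ maps submersively onto Siegel space, one reduces to deformations with fixed target, whose tangent space is Horikawa's $H^0(W,N_f)$, and $N_f$ is the restriction to the complete intersection $W$ of $\sL_0\oplus\cdots\oplus\sL_{k+1}$. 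Completeness then follows from the explicit computation $H^0(W,\sL_j)=H^0(X_j,\sL_j)\oplus\CC$ obtained by pushing forward down the tower (the extra $\CC$ being exactly the parameter $t_jw_j$), together with the surjectivity of $H^0(A,\hol_A(iX))\ra H^0(X,\hol_X(iX))$ for $i\geq 2$ (from $H^1(A,\hol_A(iX))=0$) and the Tschirnhausen normalization disposing of the $i=1$ case. None of this appears in your outline beyond the promise to ``organize the exact sequences,'' so the heart of the proof is missing. (Your concern about extra contributions to $H^1(W,T_W)$ from Rational Double Point resolutions when $n=2$ is also beside the point here: the members of $\sF_{\overline{d},\overline{m}}$ are smooth by construction, and RDPs only enter in Theorem \ref{mr}.)
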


\begin{proof}
Abelian varieties with a given polarization type are parametrized by the Siegel upper half space $\sH_{n+1}$,
and form a family $\sA_{\overline{d}, \overline{m}}$ over $\sH_{n+1}$,  endowed with a universal bundle $\mathfrak L_0$
yielding on each fibre a polarization of type $\frac{1}{m_{k+1}} \overline{d}$.
Moreover, for an ample line bundle $\sL$ on an Abelian variety $A$, the dimension $h^0(\sL)$ only depends on its polarization type:
therefore  we take as base space for $\sF_{\overline{d} ,\overline{m}}$  a smooth complex manifold $\sB_{\overline{d} ,\overline{m}}$,  a dense open set in
a vector bundle  over the family $\sA_{\overline{d}, \overline{m}}$ (the latter parametrizes the line bundles of a given polarization type
 as points in the fibres of $\sA_{\overline{d}, \overline{m}}$ transform  $\mathfrak L_0$ via translation) .

If $W$ is a variety in the family $\sF_{\overline{d} ,\overline{m}}$, then  by definition of the family $W$ is smooth and we consider the deformation of the map
$ f : W \ra A$. The map $f$ is the Albanese map of $W$:  for hypersurfaces $W \subset A$ this follows  from the theorem of Lefschetz.

 For the other ITUNCONT 's  (see remark \ref{ituncont}) we use that the Hodge number $h^{1,0}(W)$  and the first homology group are
 deformation invariants.

A deformation  of the complex manifold $W_s, s \in \sS$, of $W$ yields a morphism 
$$ \Phi : \sW \ra \sA, \ A_s = Alb (W_s)$$
 that gives the family of Albanese maps: 
since the image of $W_s$ has dimension $n$,  automatically we get  a deformation   $A_s$ of the  Albanese variety $A$ of $W$,
together with  an ample divisor  in $A_s$. Hence  we see that we get necessarily a deformation of $A$  as a polarized Abelian variety.

Since $\sB_{\overline{d} ,\overline{m}}$ maps submersively onto Siegel space, it suffices to show the completeness of the deformations
with fixed target $A$.

In this case the tangent space to the deformations (with fixed target) is given by $H^0(W, N_f)$, where $N_f$
is the normal sheaf to the map $f$, defined by Horikawa \cite{horikawaNB} as the cokernel of
$$ 0 \ra T_W \ra f^* T_A = \hol_W^{n+1} \ra N_f \ra 0. $$ 

 The exact cohomology sequence yields ($0 = H^0( T_W)$ since $W$ is of general type):

$$ 0  \ra \CC^{n+1}  \ra H^0 (N_f ) \ra H^1(T_W)  \ra   H^1(\hol_W)^{n+1} =  H^1( T_A) ,$$ 
where the first map corresponds to the action of $A$  via translations, and the last
map yields the deformation of the Abelian variety $A$.

We want to show that it suffices to show 1) for the case where $W$ corresponds to the choice $t_0= t_1 = \dots =  t_k = 0$
of the parameters $t_j$. 

 In passing, we shall prove 2).

{\bf 2) holds:} this is elementary, it suffices to fix a hypersurface of equation $\Phi(z) = 0$, and in the above equations
to take all $t_j \neq 0$, and finally fixing the constant term in the polynomial
$a_{ m_{k+1}/m_{k}}(w_0, \ldots, w_{k-1},z)$ so that, replacing inductively 
$w_j$ by $ \frac{1}{t_j} Q_{j}(w_0, \ldots, w_{k-1},z)$ we get from $Q_{k+1}$ the desired equation $\Phi(z)$.

Similarly, for each $t_j \neq 0$, we can eliminate $w_j$ and reduce to an iterated cover with a smaller number $k$
of iterated coverings.
Hence, it suffices to prove 1) in the case where all $t_j=0$.

As already observed, since $W$ is of general type, $H^0(T_W) = 0$; moreover  the image of  $H^0(f^* T_A ) = \CC^{n+1}$ accounts for deforming the class
of the line bundle $\sL_0$ in $Pic (A)$ (via translations on $A$).

Therefore it suffices to look at  the deformation of the map $ f $ into the vector bundle 
associated to $\sL_0 \oplus\cdots\oplus\sL_{k} $.

Here, we shall just show that the subfamily where we vary only the equations $Q_j$ surjects onto the
normal bundle of $W$, which is indeed the restriction of 
 $\sL_0 \oplus\cdots\oplus\sL_{k} \oplus\sL_{k+1}$ to $W$  since $W$ is a complete intersection.

For this, consider the chain of maps 
$$f_{k+1} : X_{k+1} : = W \ra X_{k}, f_k : X_k \ra X_{k-1}, \dots , f_1 : X_1 \ra X ,$$
and set $\frac{m_j }{ m_{j-1}} = : h_j$.

We have $(f_{k+1})_* (\hol_W) = \oplus_{i=0}^{h_{k+1} -1} \hol_{X_{k}} ( - i \sL_{k} )$,
hence 
 $$H^0( W, \sL_j) = H^0( X_k, \sL_j)$$  for $ j  <  k $ and 
$$H^0( W, \sL_k) = H^0( X_k, \sL_k) \oplus \CC.$$

Proceeding inductively, we see in a similar way  that 
$$H^0( W, \sL_j) = H^0( X_{j}, \sL_j) \oplus \CC.$$

And by the same argument 

$$ H^0( X_{j}, \sL_j)  = \{  Q_{j}(w_0, \ldots, w_{j-1},x) =   \sum_I  a_I(x) w^I \}, $$
where $ a_I(x) \in H^0 ( \hol_X(r(I)X))$, whereas $t_j w_j$ accounts for the summand $\CC$.

 It suffices finally to show that these sections come   from
$H^0 (Z, \hol_Z(r(I)X))$.  The surjectivity  of  
$H^0 (A,\hol_{A }(iX)) \ra H^0 (X, \hol_{X} (iX))$ for $ i\geq 2$ is  
 implied by  $H^1 (A,\hol_{A }(iX)) = 0, \forall i \geq 1,$
 as follows from the exact sequence
 $$ 0 \ra  \hol_{A} (i X) \ra \hol_{A }((i+1)X) \ra \hol_{X }((i+1)X) \ra 0  .$$

 We do not need the surjectivity for $i =  1$, as  we simply recall that all the sections of the normal bundle give a deformation which,
after an automorphism replacing $w_j$ with $ w_j - \frac{1}{h_j}  a_{1,j} (x)$,
can be put  in Tschirnhausen form, i.e., with $a_{1,j} (x) \equiv 0$.

\end{proof}

\end{defin}

%%%%%%%%%%%%%%%%%%%%%%%%%%%%%%%
\section{Deformations to hypersurface embeddings}

\begin{defin}
{\bf A 1-parameter deformation to hypersurface embedding} consists of the following data:

\begin{enumerate}
\item
 a one dimensional family of smooth projective varieties of dimension $n$ (i.e., a smooth
 projective holomorphic map $p :  \sW \ra T$ where $T$ is   a germ  of a smooth
holomorphic  curve at a point $0 \in T$)
mapping to another  family $\pi: \sZ  \ra T$ of smooth projective varieties of dimension $n+1$ via  a relative map $\Phi : \sW \ra \sZ$  such that $\pi \circ \Phi = p$
and
such that moreover 
\item

for $t\ne 0$ in $T$, $\Phi_t$ is an embedding of $W_t : = p^{-1}(t)$ in $Z_t$,
\item the restriction of the map $\Phi$ on $W_{0}$ is a generically finite morphism of degree $m$,   so that the image of $\Phi|_{W_{0}}$ is the cycle $\Si_{0}:=mX$ where  $X$ is a reduced  hypersurface in $Z_{0}$, defined by an equation
$X =\{\s=0\}$.
\end{enumerate}

\end{defin}

The following is the first part of the main theorem of \cite{cat-lee} (without the converse part):

\begin{theorem} \label{MT} (A) Suppose we have a 1-parameter deformation to hypersurface embedding  
and assume  that $K_{W_{0}}$ is ample.

Then we have:

(A1) $X$ is smooth,

(A2)  There are line bundles $\sL_0, \ldots, \sL_k$ on $\sZ$, such that  $\sL_j | _{Z_{0}} = \hol_{Z_{0}} ( m_j X)$ for $j=0, \ldots, k$,  with
  $ 1 = m_0 | m_1 | m_2  \dots | m_k | m_{k+1} := m $  (here $m$ is the degree of the morphism $\Phi_0 : W_0 \ra X$),    and such that 
  $W_{0}$ is  a complete intersection in  the vector bundle associated to $\sL_0\oplus\cdots\oplus\sL_{k}| _{ Z_{0} }$, with $\Phi_0$
  a normally induced   iterated  smooth univariate covering .

(A3)  $\sW$ is obtained  from  $\Sigma : = \Phi (\sW)$  by a finite sequence of blow-ups.

 Moreover  the local equations of $\sW$ are  of the  following standard form 
\begin{equation}\label{steq2}
\begin{cases}
\s(z)=w_0t\\
Q_1(w_0, z)=w_1t \\
\cdots\, \,\,\,\, \cdots \\
Q_{k}(w_0, \ldots, w_{k-1},z)=w_{k}t\\
Q_{k+1}(w_0, \ldots, w_k, z, t)=0.
\end{cases}
\end{equation}

\end{theorem}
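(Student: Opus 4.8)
The plan is to study the image divisor $\Sigma := \Phi(\sW)$ in $\sZ$ and to recognise $\sW$ as a resolution of its singularities, concentrated over the central fibre. Since $\pi$ has relative dimension $n+1$ while $p$ has relative dimension $n$, the map $\Phi$ is generically finite onto $\Sigma$, which is therefore a Cartier divisor in the smooth total space $\sZ$; by hypothesis (2), $\Phi$ restricts to an isomorphism $\sW \setminus W_0 \xrightarrow{\sim} \Sigma \setminus \Sigma_0$, so $\Phi : \sW \to \Sigma$ is birational, $\Sigma$ is smooth away from $\Sigma_0$, and $\sW \to \Sigma$ is a resolution whose exceptional locus lies in $W_0 = \Phi^{-1}(Z_0)$. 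Restricting to $Z_0$ gives the cycle identity $\Sigma \cdot Z_0 = m X$ with $X = \{\sigma = 0\}$ reduced, so the non-reducedness of the scheme $\Sigma_0$ along $X$ is exactly what $\sW$ unwinds. This reduces the whole statement to a local analysis of the hypersurface singularity $(\Sigma, X)$ together with a globalisation step.

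For the local analysis I would first note that $\Phi^*\tilde\sigma$ vanishes on all of the reduced divisor $W_0 = \{t = 0\}$ (here $\tilde\sigma$ is a section, on a neighbourhood $\sZ$, of a line bundle $\sL_0$ restricting to $\hol_{Z_0}(X)$), hence $\Phi^*\tilde\sigma = t\, w_0$ for a regular $w_0 \in H^0(\sW, \Phi^*\sL_0)$; this is the first equation $\sigma(z) = w_0 t$. Restricting to $W_0$, the pair $(\Phi_0, w_0)$ exhibits $W_0$ inside a univariate covering of $X$: since $\Phi_0 : W_0 \to X$ is finite of degree $m$, the function $w_0$ satisfies over $X$ a monic equation $Q_1(w_0, z) = 0$ of some degree $m_1$ dividing $m$ (Weierstrass preparation, after passing to Tschirnhausen form). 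If $(\Phi_0, w_0)$ is birational onto its image the structure is already visible at this step; otherwise one iterates, observing that $Q_1 \circ \Phi$ again vanishes on $W_0$, hence $Q_1 = t\, w_1$ for a regular $w_1$, and so on, producing the tower $W_0 = X_{k+1} \to X_k \to \dots \to X_1 \to X$ with $\deg f_j = m_j/m_{j-1}$ and the chain of monic polynomials $Q_j$ in the displayed standard form. Smoothness of $\sW$, fed back through the Jacobian criterion applied to this complete-intersection description, forces $d\sigma \neq 0$ along $X$, which is (A1); and identifying the successive substitutions $\sigma = t w_0,\ Q_j = t w_j$ with blow-ups of $\Sigma$ in $\sZ$ gives (A3). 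I expect the \emph{main obstacle} to be precisely the combinatorics of this iteration: proving the divisibility chain $1 = m_0 \mid m_1 \mid \dots \mid m_{k+1} = m$ and that the process terminates at a smooth $W_0$ after finitely many steps. It is here that the hypothesis that $K_{W_0}$ is ample should be used, to control the ramification and the discrepancies of the blow-up sequence and to rule out degenerate configurations.

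Finally I would globalise. The bundle $\hol_{Z_0}(X)$ extends to $\sL_0$ on $\sZ$ after shrinking the germ $T$, and the surjectivity of $H^0(\sZ, \sL_0^{\otimes r}) \to H^0(Z_0, \hol_{Z_0}(rX))$ for $r \geq 1$ — which follows from the vanishing of $H^1$ of the relevant twist, itself a consequence of ampleness of $X$ on $Z_0$ and of $T$ being a germ — shows that the locally constructed functions $w_j$ and the coefficients $a_I$ of the polynomials $Q_j$ all descend to sections coming from $\sZ$; this is precisely the statement that the iterated covering is normally induced, and gives line bundles $\sL_j = \hol_{\sZ}(m_j X)$ for $j = 0, \dots, k$ with $\sL_j|_{Z_0} \cong \hol_{Z_0}(m_j X)$. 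Patching the local standard forms along $X$ and checking compatibility on overlaps then upgrades them to the global complete-intersection description of $W_0$ inside the vector bundle associated to $\sL_0 \oplus \dots \oplus \sL_k|_{Z_0}$, which is (A2) together with the displayed global equations. The two genuinely delicate points are thus the termination and divisibility of the local resolution, and the passage from local normal forms to honest global line bundles on $\sZ$.
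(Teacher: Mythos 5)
First, a remark on the ground truth: this paper does not actually prove Theorem \ref{MT} --- it is quoted from \cite{cat-lee} ``without the converse part'', and the only piece of its proof reproduced here is Lemma \ref{smooth}, which the authors single out as an intermediate result that ``played an important role in the proof''. So your attempt has to be measured against the strategy that the presence of that lemma reveals.

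Your overall architecture --- observe that $\Phi^*\sigma$ vanishes on the reduced divisor $W_0=\{t=0\}$ and divide by $t$ to get $w_0$; take the monic equation $Q_1$ of the image of $(\Phi_0,w_0)$; note that $Q_1\circ\Phi$ again vanishes on $W_0$ and divide by $t$ again; iterate; then globalise the coefficients $a_I$ using the surjectivity of $H^0(\sZ,\sL_0^{\otimes r})\to H^0(Z_0,\hol_{Z_0}(rX))$ coming from ampleness of $X$ --- is indeed the strategy of \cite{cat-lee}. But you misplace the role of the hypothesis that $K_{W_0}$ is ample. The divisibility chain is not where the difficulty lies: $m_j=m_{j-1}\cdot\deg f_j$ falls out of the construction, since $w_j=Q_j/t$ and $Q_j$ is a section of $\sL_{j-1}^{\otimes \deg f_j}$ restricted to $\sW$; and termination is a degree count. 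The genuine gap is at the very end, precisely where you retreat to a vague claim that ampleness ``controls ramification and discrepancies''. Having produced the standard equations, you only know that $W_0$ maps \emph{birationally} onto the complete intersection $Y_0$ that they cut out at $t=0$. Since $\Phi_0$ is only assumed generically finite, nothing in your argument prevents this map from contracting subvarieties, i.e.\ $W_0$ could a priori be a nontrivial resolution of a singular $Y_0$ rather than being equal to it --- in which case neither (A2) nor the smoothness statements would follow. This is exactly what Lemma \ref{smooth} rules out: writing $K_{\sW}=\Psi^*(K_{\sY})+\sB$, the divisor $\sB$ is supported on the irreducible fibre $W_0$ and must vanish because $\Psi_0$ is generically finite onto the $n$-dimensional $Y_0$; then $K_{W_0}=\Psi_0^*(K_{Y_0})$ together with the ampleness of $K_{W_0}$ forces $\Psi_0$ to be finite, and finite plus birational onto a normal $\sY$ gives an isomorphism. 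Remark \ref{oss} tells you what you are left with if you drop ampleness: only that $Y_0$ is normal with canonical singularities. Your sketch never closes this loop, and it also relies on it implicitly for (A1), since the deduction ``$W_0$ smooth $\Rightarrow X$ smooth'' needs $W_0$ to literally be the iterated covering, finite and flat over $X$, not merely a birational modification of one.
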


The following   intermediate result played an important role in the proof.

 \begin{lemma} \label{smooth}
Suppose we have a one dimensional smooth family $p :  \sW \ra T$ of smooth projective varieties of dimension $n$ mapping to another  flat family $q: \sY  \ra T$ of  projective varieties of the same dimension via  a relative map $\Psi: \sW \ra \sY$ over a smooth
holomorphic  curve $T$ such that $q \circ \Psi = p$.
  
Assume that
\begin{enumerate}
\item $\sY$ is normal and Gorenstein,
\item $\Psi$ is birational,
\item for $t\ne 0$ in $T$, $\Psi$ induces an isomorphism,
\item $K_{W_{0}}$ is ample.
\end{enumerate}
Then we have  that $\Psi$ is an isomorphism, in particular $W_{0} \cong Y_{0}$.

\end{lemma}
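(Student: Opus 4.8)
\textbf{Proof proposal for Lemma \ref{smooth}.}

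The plan is to show that $\Psi$ is finite, hence (being birational onto the normal variety $\sY$) an isomorphism onto its image, and then that it is surjective; the relative-dualizing-sheaf comparison will force the exceptional locus to be empty. First I would argue that $\Psi$ is finite. If not, there is a curve $C$ in some fibre $W_0$ contracted by $\Psi_0$ (the general fibres are isomorphisms by hypothesis (3), so any positive-dimensional fibre of $\Psi$ lies over $0$). Since $K_{W_0}$ is ample by hypothesis (4), $K_{W_0}\cdot C>0$. On the other hand, because $\sY$ is normal and Gorenstein (hypothesis (1)), the relative canonical sheaf $\omega_{\sY/T}$ is invertible, and one can write $K_{W_0} = \Psi_0^*(K_{Y_0}) + E$ where $E$ is the relative ramification/discrepancy divisor supported on the exceptional locus; restricting to a contracted curve $C$ gives $K_{W_0}\cdot C = E\cdot C$. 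The core point is that $E$ is \emph{effective}: this is where one uses that $\Psi$ is birational between varieties with $\sY$ Gorenstein, so that $\Psi_0$ has no coadjunction conditions — $K_{Y_0}$ pulls back inside $K_{W_0}$ — exactly as in the Rational Double Point discussion invoked in the proof of Theorem \ref{d=1}. Granting effectivity of $E$, negativity of the exceptional divisor on a contracted curve ($E\cdot C \le 0$ for $C$ in a fibre of a birational contraction, by the standard Hodge-index / Mumford-type argument applied to the resolution $W_0\to Y_0$) contradicts $K_{W_0}\cdot C = E\cdot C > 0$. Hence $\Psi$ is finite.

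Next, a finite birational morphism onto a normal variety is an isomorphism: $\Psi: \sW \to \Psi(\sW)$ is the normalization of its image, and $\Psi(\sW)$ is open in $\sY$; I would then check surjectivity. Since $\Psi$ is flat over $T$ (both families are flat — $\sW$ is smooth over $T$, $\sY$ is flat by assumption) and proper, $\Psi(\sW)$ is closed; being also open (finite flat maps are open) and $\sY$ connected over the connected base $T$, it is all of $\sY$. Therefore $\Psi$ is a finite birational surjective morphism onto the normal variety $\sY$, hence an isomorphism, and in particular $W_0\cong Y_0$.

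The step I expect to be the main obstacle is establishing that the discrepancy/ramification divisor $E$ with $K_{W_0}=\Psi_0^*K_{Y_0}+E$ is effective and that $E\cdot C>0$ for every contracted curve $C$ — equivalently, that a birational morphism $\Psi_0: W_0\to Y_0$ with $W_0$ smooth and $Y_0$ normal Gorenstein cannot have $K_{W_0}$ ample unless it is an isomorphism. One clean way is: pull back an ample $H$ on $Y_0$; then $\Psi_0^*H$ is nef and big, $(\Psi_0^*H)^{n} = H^n > 0$, while on any contracted curve $\Psi_0^*H\cdot C = 0$, so if $K_{W_0}=\Psi_0^*H'+\text{(stuff)}$ for suitable $H'$ one gets $K_{W_0}\cdot C$ computed purely from the relative terms; combined with the Gorenstein hypothesis forcing $\omega_{W_0}=\Psi_0^!\omega_{Y_0}$ to have an effective "relative canonical" part (the trace map $\Psi_{0*}\omega_{W_0}\to\omega_{Y_0}$ being an isomorphism over the normal Gorenstein $Y_0$), one concludes $E\ge 0$ and $E\cdot C>0$, the desired contradiction. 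Alternatively, one invokes directly that a smooth variety with ample canonical class admits no rational or otherwise contracted curves mapping birationally to a normal Gorenstein model — but spelling out the Gorenstein input is the crux, so I would present the discrepancy computation explicitly rather than cite it.
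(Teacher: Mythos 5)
Your overall architecture (prove $\Psi$ is finite using ampleness of $K_{W_0}$, then conclude from birationality plus normality of $\sY$, with properness giving surjectivity) matches the paper's, and your final step is correct. But the core of your argument has a genuine gap, which you yourself flag: the effectivity of $E$ in $K_{W_0}=\Psi_0^*(K_{Y_0})+E$. Effectivity of $E$ is precisely the statement that $Y_0$ has canonical singularities, and this does \emph{not} follow from $Y_0$ being normal and Gorenstein: the trace map $\Psi_{0*}\omega_{W_0}\to\omega_{Y_0}$ is injective but in general \emph{not} surjective, and its failure to be surjective is exactly the failure of canonicity, so your proposed justification points the wrong way. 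Worse, the purely fibrewise statement you are implicitly proving --- that a birational morphism from a smooth projective variety with ample canonical class onto a normal Gorenstein projective variety is an isomorphism --- is false. A concrete counterexample: the minimal resolution $W_0$ of a general quintic surface $Y_0\subset\PP^3$ with one ordinary triple point has $K_{W_0}$ ample, $Y_0$ is a normal Gorenstein (hypersurface) singularity, yet $\Psi_0$ contracts the exceptional elliptic curve $C$ and one computes $K_{W_0}=\Psi_0^*(K_{Y_0})-C$, so $E=-C$ is anti-effective. (Even granting effectivity, your claim $E\cdot C\le 0$ for an arbitrary contracted curve is not automatic in dimension $\ge 3$.) Since the fibrewise statement is false, the one-parameter structure and hypothesis (3) must enter the discrepancy computation, and in your write-up they never do.

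The paper's proof makes exactly this move: it compares canonical divisors on the \emph{total spaces}, writing $K_{\sW}=\Psi^*(K_{\sY})+\sB$, which is legitimate because $\sW$ is smooth and $\sY$ is normal and Gorenstein. Because $\Psi$ is an isomorphism off the central fibre, the divisor $\sB$ is supported on the irreducible divisor $W_0$, hence $\sB=bW_0$; and since $\Psi_0:W_0\to Y_0$ is generically finite onto the $n$-dimensional $Y_0$ (so $W_0$ is not contracted, and both $W_0$ and $Y_0$ are the reduced fibres over $0\in T$, cut out by the same parameter $t$), the coefficient $b$ vanishes. Restricting $K_{\sW}=\Psi^*(K_{\sY})$ to the central fibre gives $K_{W_0}=\Psi_0^*(K_{Y_0})$ on the nose, so any curve contracted by $\Psi_0$ would satisfy $K_{W_0}\cdot C=0$, contradicting ampleness --- no effectivity or negativity lemma is needed. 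To repair your proof, replace your fibrewise discrepancy divisor $E$ by this total-space computation; the remainder of your argument then goes through.
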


\begin{proof}

We have $K_{\sW}=\Psi^*(K_{\sY}) + \sB$. Since we assume that $\Psi$ induces an isomorphism for $t \ne 0$ in $T$, the support of 
the Cartier divisor $\sB$ is contained in $W_{0}$, which is  irreducible.

Now  $Y_{0}$ has dimension $n$ and the morphism  $\Psi_0 : W_{0} \ra Y_{0}$ is generically finite, hence we conclude that $\sB=0$.
In particular, $K_{\sW}=\Psi^*(K_{\sY})$; restricting to the special fibre, we obtain  $K_{W_{0}} =\Psi_0 ^*(K_{Y_{0}})$. Since 
by assumption $K_{W_{0}}$ is ample, we obtain that $\Psi_0$ is finite, hence also $\Psi$ is finite, whence an isomorphism in view of the normality of $\sY$.

\end{proof}

\begin{rem}\label{oss} Without the  assumption that   $K_{W_{0}}$ is  ample one can only assert that $Y_{0}$ is normal with at most canonical singularities.

\end{rem}

\begin{prop}\label{ample}
Suppose we have a 1-parameter deformation to hypersurface embedding, where $W_t$ is a hypersurface in an Abelian variety, 
and $W_0$ is K\"ahler.

 If $ n \geq 3$, then necessarily $K_{W_{0}}$ is ample.
\end{prop}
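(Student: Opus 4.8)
The plan is to show that the canonical bundle of $W_0$ is ample by analyzing the Albanese map $a_{W_0}:W_0 \ra A_0 := \mathrm{Alb}(W_0)$, which by the Lefschetz-type considerations in Section 2 coincides (up to the identification of $Z_0$ with $A_0$) with the restriction $\Phi_0 = \Phi|_{W_0}$. First I would record that, since $W_t$ for $t\neq 0$ is a smooth ample hypersurface in an Abelian variety, the invariants $h^{1,0}$, $h^{n,0}$, $\pi_1$ and the cohomology ring are all deformation invariants (for $h^{1,0}$ and $\pi_1$ this is standard for K\"ahler degenerations; $W_0$ is K\"ahler by hypothesis), so $W_0$ satisfies the topological hypotheses (b'), (b'1), (b'2) appearing in Proposition \ref{prel}. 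Hence by Proposition \ref{prel}, for $n\geq 3$ the Albanese map $a_{W_0}$ is a finite morphism onto an ample hypersurface $W \subset A_0$, and $W_0$ is projective.

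Next I would use the structure coming from Theorem \ref{MT}: the image cycle is $\Sigma_0 = mX$ with $X$ a \emph{smooth} ample hypersurface in $A_0 = Z_0$ (part (A1)), and $\Phi_0:W_0\ra X$ is an iterated smooth univariate covering of normal type (part (A2)). The key point is then that $K_{W_0}$ can be computed explicitly from this covering structure. Since each intermediate covering $f_j:X_j \ra X_{j-1}$ is a smooth univariate covering associated to the line bundle $\sL_{j-1} = \hol_X(m_{j-1}X)|_{X}$ (pulled back), the relative dualizing sheaf is $\omega_{f_j} = f_j^*\big(\sL_{j-1}^{\otimes (h_j - 1)}\big)$ where $h_j = m_j/m_{j-1}$; chaining these with $K_X = \hol_X((m_0)X)|_X = \hol_X(X)|_X$ (adjunction for the ample hypersurface $X$ in the Abelian variety $A_0$, whose canonical bundle is trivial) gives that $K_{W_0}$ is the pull-back to $W_0$ of $\hol_X(cX)|_X$ for some strictly positive integer $c$ (namely $c = \sum_{j} h_j\cdots h_{k+1} - (\text{something}) \geq 1$; in any case $c\geq 1$ since the bottom term $K_X$ already contributes $\hol_X(X)$ and all covering contributions are effective). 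Therefore $K_{W_0} = a_{W_0}^*\big(\hol_{W}(cW)|_W\big)$ is the pull-back of an ample line bundle on $W$ under the \emph{finite} morphism $a_{W_0}$, and a pull-back of an ample bundle under a finite morphism is ample. This gives the claim.

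Alternatively, and perhaps more cleanly, I would avoid the explicit computation and argue as follows: since $a_{W_0}$ is finite for $n \geq 3$, it suffices to show $K_{W_0}$ is the pull-back of an ample divisor on $A_0$ (or on $W$). By Proposition \ref{prel} and the Lefschetz theorem, $H^2(W_0,\ZZ) \cong H^2(A_0,\ZZ)$ via $a_{W_0}^*$ (using (b'2) and that $H^2(W_0,\ZZ)$ is torsion-free because $H_1$ is free), so \emph{every} line bundle on $W_0$ is a pull-back from $A_0$; in particular $K_{W_0} = a_{W_0}^*(M)$ for a line bundle $M$ on $A_0$. Then $K_{W_0}^n = (\deg a_{W_0})\cdot M^n\cdot [W]$, and since $W_0$ is of general type (it carries the base-point-free big system coming from $H^0(A_0,\Omega^n_{A_0})\ra H^0(W_0,\hol_{W_0}(K_{W_0}))$, as used in the proof of Theorem \ref{d=1}), $K_{W_0}$ is big and nef; combined with $M$ being a polarization-type class restricted from an ample divisor (one checks $M$ is effective and nef on $A_0$, and nontrivial on $W$), one concludes $K_{W_0} = a_{W_0}^*(rW|_W)$ with $r\geq 1$, hence ample.

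\textbf{Main obstacle.} The delicate point is to nail down that the line bundle $M$ with $K_{W_0} = a_{W_0}^*(M)$ is \emph{strictly} positive (not merely nef) on the image hypersurface $W$ — equivalently, that no covering contribution can cancel the adjunction term $\hol_X(X)$. This is exactly the content that fails in dimension $n=2$ (where $W$ may have rational double points and $a_{W_0}$ need not be finite, as in Remark \ref{oss} and the remark following Proposition \ref{prel}), so the argument must genuinely use $n\geq 3$ via the finiteness of $a_{W_0}$ from Proposition \ref{prel}(III). The cleanest route is the explicit computation of $K_{W_0}$ through the iterated-covering formula of Theorem \ref{MT}(A2), where one sees directly that $K_{W_0} = a_{W_0}^*\big(\hol_W((m + \sum_{j=1}^{k+1}(h_j-1)m_j/h_j\cdots)W)|_W\big)$ with a manifestly positive coefficient, and then invoke that a finite pull-back of an ample bundle is ample.
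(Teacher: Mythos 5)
Your overall strategy is sound in outline -- use Proposition \ref{prel} to get projectivity and, for $n\geq 3$, finiteness of the Albanese map, and then exhibit $K_{W_0}$ as the pull-back of an ample bundle under a finite morphism -- and this is indeed the skeleton of the paper's argument. But your main route has a genuine circularity: you invoke Theorem \ref{MT}(A1)--(A2) to obtain the smoothness of $X$ and the iterated-univariate-covering structure of $\Phi_0$, and Theorem \ref{MT} has as an explicit hypothesis that $K_{W_0}$ is ample, which is exactly what you are trying to prove. The paper is careful on precisely this point: it does not cite Theorem \ref{MT} but \emph{reruns its proof}, constructing from $\Sigma=\Phi(\sW)$ by a finite sequence of blow-ups a family $q:\sY\ra T$ in standard form with \emph{relatively ample canonical divisor} (a construction that does not use ampleness of $K_{W_0}$), and then replaces the role that ampleness of $K_{W_0}$ plays in Lemma \ref{smooth} (namely, forcing $\Psi_0$ to be finite) by the finiteness of the Albanese map coming from Proposition \ref{prel}(III). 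Since the discrepancy divisor $\sB$ vanishes exactly as in the proof of Lemma \ref{smooth} (it is supported on the irreducible fibre $W_0$ and $\Psi_0$ is generically finite), one gets $K_{W_0}=\Psi_0^*(K_{Y_0})$, ample as a finite pull-back of an ample bundle. So your argument can be repaired, but only by redoing the construction underlying Theorem \ref{MT} without its ampleness hypothesis; as written, the appeal to (A1)--(A2) is not legitimate.

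Your alternative argument has a second, independent gap, which you honestly flag: writing $K_{W_0}=a_{W_0}^*(M)$ with $M=\hol_{A_0}(W-D)$ restricted to $W$, bigness of $K_{W_0}$ only gives that $W-D$ has positivity at least $n$ on the $(n+1)$-dimensional torus $A_0$, not $n+1$; if the positivity is exactly $n$ then $W-D$ is pulled back from an $n$-dimensional quotient torus and its restriction to $W$ is big and nef but need not be ample. Closing that gap is essentially the conductor analysis in the proof of Theorem \ref{d=1}, which requires the extra numerical hypotheses (I) or (II) that are not available here. So the ``clean'' alternative does not go through, and the explicit covering computation is the viable route -- provided the covering structure is established without presupposing ampleness, as above.
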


\begin{proof}
By proposition \ref{prel} it follows first of all  that $W_0$ is projective.

Rerunning the proof of theorem \ref{MT} we construct, 
 by a finite sequence of blow-ups 
starting from  $\Sigma : = \Phi (\sW)$, 
a family $q : \sY \ra T$ as in  lemma \ref{smooth},
and with relatively ample canonical divisor,
such that   the local equations of $\sY$ are  of the  following standard form 
\begin{equation}\label{steq2}
\begin{cases}
\s(z)=w_0t\\
Q_1(w_0, z)=w_1t \\
\cdots\, \,\,\,\, \cdots \\
Q_{k}(w_0, \ldots, w_{k-1},z)=w_{k}t\\
Q_{k+1}(w_0, \ldots, w_k, z, t)=0.
\end{cases}
\end{equation}

 Since  $ n \geq 3$, again proposition  \ref{prel} guarantees  
that $\Psi_0$ is finite. Hence, as in  the proof of lemma \ref{smooth},   $K_{W_{0}} = \Psi_0^* (K_{Y_0})$ is ample.

\end{proof}

%%%%%%%%%%%%%%%%%%%%%%%%%%%%%
\section{Proof of the main theorem \ref{mr} }

For the first assertion, theorem \ref{open} ensures that ITUNCONT 's yield an open set in Teichm\"uller space; this open set is irreducible because the dense open set corresponding to smooth hypersurfaces in Abelian varieties is irreducible.

We would like  now to show that  this set is closed.

We observe that every variety $X_0$ in the closure will be the limit of a 1-parameter deformation. 

First of all, we can use  some results of \cite{deflarge}, namely lemma 2.4 and corollary 2.5, asserting that $X_0$ will have a very good Albanese variety,
and that the image of the Albanese map will be a hypersurface. Hence the algebraic dimension of $X_0$ is equal to its dimension.

 At this point we use the  assumption that $X_0$ is K\"ahler for $ n \geq 3$: for $n=2$ it follows automatically since $X_0$ is then of general type.

In dimension $ n \geq 3$, as observed in proposition \ref{ample},  the canonical bundle will be ample since the Albanese map is finite.

We can then apply Theorem \ref{MT} to conclude.

For $n=2$ we work directly with the canonical models of our surfaces, and everything works verbatim,  by remark \ref{oss}.

The case $d_1 =1$ is just a restatement of theorem \ref{d=1}: 
 observe just  that homeomorphism implies that the Betti numbers and $b^+$ are the same,
hence also $p_g, K^2$.

\section{Final question}

It is natural to ask whether our irreducible connected component  is the unique one.
To prove this, it would of course be sufficient to show that the Albanese map can be deformed to an embedding.

A second question is whether, for $n \geq 3$, we really get a connected component of Teichm\"uller space: for this it would be
sufficient to show that the Albanese map for  a limit variety $X_0$  is finite (equivalently, that $X_0$ is projective).  

\medskip

 {\bf Acknowledgements:} the first author would like to  thank Stefan Schreieder for an interesting conversation
  and especially Ingrid Bauer for spotting a mistake in the first version of theorem \ref{d=1}, and for 
 several very useful discussions. Thanks to the referee for careful reading of the manuscript, which helped remove 
 a few inaccuracies.

%%%%%%%%%%%%%%%%%%%%%%%%%%%%%%%%%%%%%%%%%%%%%%%%%%%%%%%%%%%%%%%%%%%%%%%%%%%%%%%%%%%%%%%%%%%%%%%%%%%%%%%%%%%%%%%%%%%%%%%%%%%%
%%%%%%%%%%%%%%%%%%%%%%%%%%%%%%%%%%%%%%%%%%
%%%%%%%%%%%%%%%%%%%%%%%%%%%%%%%%%%%%%%%%%%

\end{document}